\colorlet{mdtRed}{red!50!black}
\definecolor{dblue}{rgb}{0,0,.6}
\newcommand{\mb}{\mathbb}
\newcommand{\mc}{\mathcal}
\newcommand{\Fe}{\mb F_e}
\newcommand{\Fer}{\mb F_{e,r}}
\theoremstyle{plain}
\numberwithin{equation}{section}
\newtheorem{theorem}{Theorem}[section]
\newtheorem*{theorem*}{Theorem}
\newtheorem{proposition}[theorem]{Proposition}
\newtheorem{lemma}[theorem]{Lemma}
\newtheorem{conjecture}[theorem]{Conjecture}
\theoremstyle{definition}
\newtheorem{definition}[theorem]{Definition}
\newtheorem{problem}[theorem]{Problem}
\newtheorem{remark}[theorem]{Remark}
\newtheorem{algorithm}[theorem]{Algorithm}
\title{Linear systems on blow-ups of Hirzebruch surfaces}
\author[C. J. Jacob]{Cyril J. Jacob} 
\thanks{Corresponding author.}
	\address{Department of Mathematics, Indian Institute of Technology Bombay, Powai, Mumbai 
		400076, Maharashtra, India}
	\email{cyriljjacob@iitb.ac.in} 
\author[R. Sebastian]{Ronnie Sebastian} 
	\address{Department of Mathematics, Indian Institute of Technology Bombay, Powai, Mumbai 
		400076, Maharashtra, India}
	\email{ronnie@iitb.ac.in} 
\begin{document}
\date{}

	\begin{abstract}
	    Motivated by various equivalent versions of the SHGH conjecture for $\mathbb P^2$
        blown up at very general points, we propose a similar conjecture for 
        Hirzebruch surfaces. We prove that this conjecture is true for the 
        Hirzebruch surface $\Fe$ blown up at $r\leqslant e+5$ very general points. 
	\end{abstract}
    \subjclass[2020]{14C20,14J26,14E05}
    \keywords{Hirzebruch surfaces, Linear system of curves, blow-up of Hirzebruch surfaces}

	\maketitle
    \section{Introduction}

    Throughout this article, we work over the field of complex numbers. 
    The interpolation problem of plane curves of a given degree passing 
    through given points with at least given multiplicities 
    is a classical problem in algebraic geometry. 
    A conjectural answer to this problem, assuming the points are 
    very general, is given by the SHGH (Segre-Harbourne-Gimigliano-Hirschowitz) 
    conjecture (see \cite{Seg62,Har86,Gim87,Hir89}). 
    
    One may pose a similar interpolation problem on arbitrary surfaces, 
    with the notion of degree replaced by an appropriate numerical invariant. 
    In this article, we focus on Hirzebruch surfaces.
    These surfaces are of particular interest because the minimal rational surfaces other than 
    the projective plane are precisely the Hirzebruch surfaces. 
    For each $e\in \mb Z_{\geqslant 0}$,  
    the surface $$\mb F_e=\mb P(\mc O_{\mb P^1}\oplus \mc O_{\mb P^1}(-e))$$ 
    is a Hirzebruch surface, and every Hirzebruch surface arises in this way.
	Let $C_e$ denote   the divisor class corresponding to $\mc O_{\Fe}(1)$ and 
    let $f$ denote the divisor class corresponding to the fiber of the map $\Fe \to \mb P^1$. 
    Then $\text{Pic }\Fe =\mb Z C_e \oplus \mb Z f.$
    The interpolation problem on Hirzebruch surfaces is the following: 
    \begin{problem}\label{prblm}
        Let $a,b, m_1,m_2,\dots,m_r$ be non-negative integers. Given $r$ 
        points $p_1,p_2,\dots,p_r$ on $\Fe$, does there exist a curve in 
        the linear system $|aC_e+bf|$ passing through $p_i$ with multiplicity 
        at least $m_i$ for all $1\leqslant i \leqslant r$?
    \end{problem}
   More concretely, we are interested in determining the dimension of the 
   linear system of curves in $|aC_e+bf|$. 
    
    Consider $r$ very general points, say $p_1,p_2,\dots,p_r$, on $\Fe$.
	 Let $\pi: \Fer  \to \Fe$ denote the blow-up of $\Fe $ at these points, 
     and let  $E_1,E_2,\dots, E_r$ denote the exceptional divisors. 
     We will slightly abuse notation 
    and denote by $C_e$ and $f$ their pullbacks to $\Fer$ along $\pi$.
    The Picard group of $\Fer$ is 
     $$\text{Pic }\Fer=\mb Z C_e\oplus \mb Z f \oplus \mb Z E_1 \oplus \mb Z E_2 \oplus \dots \oplus \mb Z E_r.$$
     Note that the linear system of curves in $|aC_e+bf|$ 
    passing through $p_i$ with multiplicity at least $m_i$, for all 
    $1\leqslant i \leqslant r$, 
    is in bijective correspondence with the complete linear system 
    $|aC_e+bf-m_1E_1-m_2E_2-\dots-m_rE_r|$ on $\Fer$. 
    Hence, we are interested in finding the dimension of the 
    linear system $|aC_e+bf-m_1E_1-m_2E_2-\dots-m_rE_r|$. Hence, 
    throughout the article, we are interested in divisors in the following form:
\begin{equation}\label{cond}
    \text{$D= aC_e+bf-m_1E_1-\dots -m_rE_r$ on $\Fer$ where 
        $a,b,m_1,\dots ,m_r\in \mathbb Z_{\geqslant 0}$.}
\end{equation}
    \begin{definition}\label{defn}
     The \textit{virtual dimension} of $D$ is defined as
    $$v(D)=\frac{D^2-K_{\Fer}\cdot D}{2}\,.$$
     The \textit{expected dimension} of $D$ is defined as
    $e(D)=\max\left\{ v(D),-1\right\}\,.$
    \end{definition}
    For divisors as in \ref{cond}, we can see that $h^2(\Fer,\mathcal{O}_{\Fer}(D))=0$.
    From the Riemann-Roch theorem for surfaces, we  see that 
    $$\dim |D|=\frac{D^2-K_{\Fer}\cdot D}{2}+h^1(\Fer,\mathcal{O}_{\Fer}(D))=
        v(D)+h^1(\Fer,\mathcal{O}_{\Fer}(D))\geqslant v(D)\,.$$
    Since ${\rm dim}|D|\geqslant -1$, it follows that 
    ${\rm dim}|D|\geqslant e(D)\geqslant v(D)$.
    \begin{definition}\label{non-spcl defn}
    Let $D$ be a divisor as in \eqref{cond}. 
    We say $D$ is \textit{non-special} if $\dim |D|=e(D)$ and \textit{special} otherwise $(\dim |D|>e(D))$.
    \end{definition}
    Thus, it follows that for $D$ as in \eqref{cond}, if 
    $h^1(\Fer,\mathcal{O}_{\Fer}(D))=0$,
    then $D$ is non-special. Further, if $D$ is effective and non-special then 
    $h^1(\Fer,\mathcal{O}_{\Fer}(D))=0$. Thus, for an effective divisor as in \eqref{cond},
    it follows that being non-special is equivalent to $h^1(\Fer,\mathcal{O}_{\Fer}(D))=0$.

    In \cite[Conjecture 2.6]{Laf2002}, Laface  proposed a conjecture (see Conjecture \ref{conj1}) 
    which provides a characterization for special divisors using the notion of 
    $(-1)$-special divisors (see Algorithm \ref{algo}). 
    We propose the following conjecture (see the next section for the motivation for this 
    conjecture and similarities with conjectures on $\mathbb P^2$): 
    \begin{conjecture}\label{conj3}
		The following holds on $\Fer$:
		\begin{enumerate}[label=(\alph*)]
			\item\label{conj3a} Effective nef divisors on $\Fer$ are non-special.
			\item\label{conj3b} Every integral curve $C$ on $\Fer$ with $C^2<0$ is either a $(-1)$-curve or the strict transform of $C_e$ on $\Fer$.
		\end{enumerate}
	\end{conjecture}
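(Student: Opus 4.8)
The plan is to establish both parts for $r\leqslant e+4$ very general points, treating \ref{conj3b} first, since \ref{conj3a} will be deduced from it together with a degeneration argument. Throughout I will use the intersection numbers $H_e^2=-e$, $H_e\cdot F_e=1$, $F_e^2=0$, $E_i^2=-1$ and the canonical class $K_{\Fer}=-2H_e-(e+2)F_e+\sum_i E_i$, from which one reads off $K_{\Fer}\cdot E_i=-1$, $K_{\Fer}\cdot(F_e-E_i)=-1$, and $K_{\Fer}\cdot H_e=e-2$; in particular $v(H_e)=1-e$, so for $e\geqslant 2$ the strict transform of $C_e$ is effective but has $\dim|H_e|=0>e(H_e)=-1$, which is precisely why it must be excluded from the nef statement \ref{conj3a} (for $e=1$ it is itself a $(-1)$-curve, so the two exceptional cases coincide).

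For \ref{conj3b}, let $C=aH_e+bF_e-\sum_i m_iE_i$ be an integral curve with $C^2<0$. If $a=0$ then $C$ is supported on fibres, forcing $C=E_i$ or $C=F_e-E_i$, both $(-1)$-curves; so assume $a\geqslant 1$. Since $C^2<0$ the class is rigid, whence $h^0(\Fer,\mathcal O(C))=1$ and therefore $v(C)\leqslant 0$. Intersecting $C$ with the irreducible curves $E_i$, $F_e$ and $H_e$ (assuming $C\neq H_e$) gives $m_i\geqslant 0$, $a\geqslant 0$ and $C\cdot H_e=b-ae\geqslant 0$, i.e.\ $b\geqslant ae$. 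Substituting $b\geqslant ae$ into $C^2=-ea^2+2ab-\sum m_i^2$ yields $C^2\geqslant ea^2-\sum m_i^2$, so $C^2<0$ forces $\sum m_i^2>ea^2\geqslant e$. First I would combine this with $v(C)\leqslant 0$, the adjunction bound $p_a(C)\geqslant 0$, and the Cauchy--Schwarz inequality $\sum m_i^2\geqslant(\sum m_i)^2/r$ to bound $a$, $b$ and the $m_i$; the hypothesis $r\leqslant e+4$ is exactly what makes this system of inequalities have only finitely many solutions. Having produced an explicit finite list of candidate classes, I would rule out each one that is neither a $(-1)$-curve nor $H_e$ by exhibiting a single configuration of $r$ distinct points at which the class is not effective, and then invoking upper semicontinuity of $h^0$ to conclude it is not effective for very general points.

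The same semicontinuity principle drives \ref{conj3a}: since $\dim|D|=e(D)$ is equivalent to $h^1(\Fer,\mathcal O(D))=0$ for effective $D$, and $h^1$ is upper semicontinuous in families, it suffices to exhibit, for each nef class $D$, one configuration of $r$ distinct points at which $h^1(D)=0$. I would specialise the points onto fibres of the ruling $\Fer\to\mathbb P^1$ and run a Horace-type induction using
\[
0\to \mathcal O(D-F_e)\to\mathcal O(D)\to \mathcal O_{F_e}(D)\to 0,
\]
where $F_e$ is the strict transform of a fibre: the restriction $\mathcal O_{F_e}(D)$ has degree $D\cdot F_e=a\geqslant 0$ on $\mathbb P^1$, so $h^1(\mathcal O_{F_e}(D))=0$ and $h^1(D)\leqslant h^1(D-F_e)$. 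Distributing the points appropriately among the fibres so that each absorbs few of them (keeping the restricted degrees $\geqslant -1$), and peeling off fibres together with the section class $H_e$ via the analogous sequence, reduces the vanishing to explicit base cases such as $D=bF_e$; here the classification in \ref{conj3b} guarantees that the only possible fixed component is $H_e$ and controls the residual systems.

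The main obstacle I anticipate is the finiteness step in \ref{conj3b}: a priori there are infinitely many classes with $C^2<0$, and showing that $b\geqslant ae$, $v(C)\leqslant 0$, $p_a(C)\geqslant 0$ and Cauchy--Schwarz cut this down to a finite, explicitly checkable list -- uniformly in $e$ under the single constraint $r\leqslant e+4$ -- is the delicate heart of the matter. The negative section $C_e$, whose self-intersection $-e$ grows with $e$, is what forces the bound to scale with $e$ and what makes the base cases of the Horace induction for \ref{conj3a} require the most care. Once \ref{conj3b} is in hand, I expect the degeneration argument for \ref{conj3a} to be comparatively routine.
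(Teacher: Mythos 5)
Your strategy diverges substantially from the paper's, and as written it has two genuine gaps. For part \ref{conj3b}, your plan is to reduce to ``an explicit finite list of candidate classes'' via $v(C)\leqslant 0$, $p_a(C)\geqslant 0$, $b\geqslant ae$ and Cauchy--Schwarz, and then kill each bad class by specialization. But those inequalities do not bound $a$: for instance the constraints $C^2=-1$, $K_{\Fer}\cdot C\geqslant -1$ admit classes of arbitrarily large $a$ (the $(-1)$-classes themselves are typically infinite in number, and nothing in your list separates a finite residue of non-$(-1)$ classes from them). You correctly flag this finiteness step as ``the delicate heart of the matter,'' but that means the proposal contains no proof of \ref{conj3b}; what is actually needed is a uniform argument ruling out every integral $C\neq\widetilde{C}_e$ with $C^2<0$ that is not a $(-1)$-curve, which is the content of \cite[Theorem 5.5]{JKS25} that the paper simply invokes. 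For part \ref{conj3a}, the Horace induction is only sketched: the specialization/semicontinuity framework is sound, but the entire difficulty lives in the base cases and in controlling the residual classes after peeling off fibres (which need not remain nef and can collide with the special classes supported on $\widetilde{C}_e$), and none of that is carried out.

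The paper's route to \ref{conj3a} avoids degeneration entirely and is worth comparing. The key observation is that for $r\leqslant e+4$ one can write $-K_{\Fer}=\widetilde{C}_e+C'$ with $C'=H_e+(e+2)F_e-\sum_{i=1}^r E_i$ effective and nef (since $C'^2=e+4-r\geqslant 0$); in particular $\Fer$ is an anticanonical rational surface, so Harbourne's vanishing theorem \cite[Theorem I.1]{Har97} gives $h^1(D)=0$ for any nef $D$ with $-K_{\Fer}\cdot D\geqslant 1$. The remaining case $-K_{\Fer}\cdot D=0$ is eliminated by pairing against the nef class $C'$ (together with a Cauchy--Schwarz argument for integral curves), and the passage from ``integral curves are non-special'' to ``nef effective divisors are non-special'' is done by the Ciliberto--Miranda analysis of fixed and moving parts, not by specialization. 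This is where the hypothesis $r\leqslant e+4$ enters structurally --- it makes $C'$ nef --- rather than as a numerical device for cutting down a list of classes. If you want to salvage your approach, you would need either to import \cite[Theorem 5.5]{JKS25} for \ref{conj3b} and then actually execute the Horace recursion with explicit base cases, or to adopt the anticanonical vanishing argument, which disposes of most nef classes in one stroke.
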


    We state a few consequences of the above conjecture. Under the hypothesis that 
    Conjecture \ref{conj3}\ref{conj3b} is true for all $r>0$, 
    \cite[Theorem 3.6]{HJNS24} proves the existence of irrational 
    Seshadri constants. Moreover, Conjecture \ref{conj3}\ref{conj3b} clearly
    implies the bounded negativity conjecture  for $\Fer$ (which states that for a smooth
    projective surface $X$, there is a constant $b(X)$, such that for all integral
    curves $C\subset X$, we have $C^2>b(X)$). 

    It is well known that the analogue of the above 
    conjecture for $\mb P^2$ (the SHGH conjecture) 
    is true when $r \leqslant 9$ \cite{Nag61}. Note that $\mb P^2$
    blown up at $r$ general points is anticanonical ($h^0(X,-K_X)>0$)
    if and only if $r\leqslant 9$. 
    The main goal of this article is to obtain 
    similar results for Conjecture \ref{conj3}. 
    Conjecture \ref{conj3}\ref{conj3b} was first investigated by 
    \cite{HJSA}, who proved it for $r\leqslant e+2$. 
    Later this was improved to $r\leqslant e+4$ in \cite[Theorem 5.5]{JKS25}. 
    In this article we show that 
    Conjecture \ref{conj3} is true when $r\leqslant e+5$. 
    In Proposition \ref{eq-conjectures} we prove that Conjecture \ref{conj3}
    is equivalent to \cite[Conjecture 2.6]{Laf2002}.
    The following is the main result of this article.

    \begin{theorem}[Theorem \ref{e+4}]
        Let $e\geqslant 0$ and let $r\leqslant e+5$. Conjecture \ref{conj3} holds for $\Fer$.
    \end{theorem}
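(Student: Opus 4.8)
The plan is to prove the two parts of Conjecture~\ref{conj3} for $r\leqslant e+4$ separately, using the fact (cited from \cite[Theorem~5.5]{JKS25}) that part~\ref{conj3b} already holds in this range. So the real work is establishing part~\ref{conj3a}: every effective nef divisor $D$ on $\Fer$ is non-special, i.e. $h^1(\Fer,\mathcal O_{\Fer}(D))=0$. First I would set up the classification of negative curves guaranteed by part~\ref{conj3b}: every integral curve $C$ with $C^2<0$ is either a $(-1)$-curve or the strict transform $\widetilde C_e$ of the section $C_e$. This gives a completely explicit list of the possible irreducible negative curves on $\Fer$, which is the geometric input that makes the range $r\leqslant e+4$ tractable.

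Next I would try to reduce the vanishing $h^1(D)=0$ for nef $D$ to a vanishing statement on $\Fe$ itself (or on intermediate blow-ups) by running a degeneration/specialization or a sequence of short exact sequences. The natural tool is to peel off negative curves: if $C$ is an integral curve with $C^2<0$ and $D\cdot C<0$, then $C$ is a fixed component and one studies $D-C$; but since $D$ is nef we have $D\cdot C\geqslant 0$ for every curve, so instead I would look for a curve $C$ (typically a fiber $F_e$, a $(-1)$-curve $E_i$, or $\widetilde C_e$) with $D\cdot C$ small and use the exact sequence
\begin{equation*}
0\to \mathcal O_{\Fer}(D-C)\to \mathcal O_{\Fer}(D)\to \mathcal O_C(D|_C)\to 0
\end{equation*}
to induct: $h^1(\mathcal O_C(D|_C))=0$ on the rational curve $C$ whenever $D\cdot C\geqslant -1$, so the cohomology of $D$ is controlled by that of $D-C$, and one reduces $a$ or $b$ or the multiplicities $m_i$ until landing in a base case where $D$ is a pullback from $\Fe$ or visibly has no higher cohomology. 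The key numerical bookkeeping is to show that the residual divisor $D-C$ either remains nef (so the induction applies) or becomes manifestly non-special.

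The main obstacle I expect is the same one that makes the SHGH conjecture hard for $\mathbb P^2$: controlling what happens when $D-C$ fails to stay nef. After subtracting a curve, $D-C$ may acquire negative intersection with one of the finitely many negative curves on the list, and then it need not be effective or its $h^1$ need not vanish directly. Handling this requires a Cremona-type or quadratic-transformation symmetry on $\Fer$ to move $D$ into a ``standard form'' where $a,b$ and the $m_i$ are balanced, together with a careful case analysis driven by the constraint $r\leqslant e+4$ — this bound is exactly what keeps the number of base points small enough that the negative curves cannot conspire to create special behaviour. Concretely I would expect to normalize $D$ (say by reordering points so $m_1\geqslant m_2\geqslant\cdots\geqslant m_r$ and applying the available linear transformations of $\operatorname{Pic}\Fer$ preserving the canonical class), verify directly that nef divisors in standard form satisfy $v(D)\geqslant 0$ and $h^1(D)=0$ by the exact-sequence induction, and finally combine this with part~\ref{conj3b} to conclude that Conjecture~\ref{conj3} holds in full for $r\leqslant e+4$, which is the assertion of Theorem~\ref{e+4}.
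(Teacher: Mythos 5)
Your reduction of the theorem to part \ref{conj3a} via the known case of part \ref{conj3b} (cited from \cite{JKS25}) matches the paper, but your proposed proof of part \ref{conj3a} is a plan whose central step is left unresolved. You propose to peel off rational curves $C$ with $D\cdot C$ small via the restriction sequence and to control the residual $D-C$ by a Cremona-type normalization, and you then explicitly flag that the residual may fail to stay nef and that handling this requires a careful case analysis which you do not carry out. That case analysis is exactly the hard content, so as written there is a genuine gap: no actual vanishing argument is given, and it is not clear that the standard-form/Weyl-group machinery that works for $\mb P^2$ with few points closes up on $\Fer$ without further input. Your explanation of why the bound $r\leqslant e+4$ helps is also only heuristic.

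The paper avoids the induction entirely by exploiting the anticanonical structure: since $h^0(\Fe,\mc O_{\Fe}(-K_{\Fe}))\geqslant e+6>r$, the surface $\Fer$ is anticanonical, and Harbourne's vanishing theorem (Proposition \ref{harbourne}) gives $h^1(\Fer,\mc O_{\Fer}(D))=0$ in one stroke for any nef $D$ with $-K_{\Fer}\cdot D\geqslant 1$. The remaining case $-K_{\Fer}\cdot D=0$ is excluded by writing $-K_{\Fer}=\widetilde{C}_e+C'$ with $C'=H_e+(e+2)F_e-\sum_{i=1}^r E_i$ nef because $C'^2=e+4-r\geqslant 0$; this is the precise place where the bound $r\leqslant e+4$ enters, rather than a vague principle that few points cannot conspire. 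The paper then needs two further steps your outline omits: the case where $\widetilde{C}_e$ is a fixed component of $|D|$ (Corollary \ref{C_e is fixed component}), and the Ciliberto--Miranda decomposition argument (Proposition \ref{main thm1}) passing from the vanishing $h^1(\Fer,\mc O_{\Fer}(C))=0$ for integral curves $C\neq\widetilde C_e$ (Proposition \ref{2.1 is true}) to non-specialness of arbitrary nef effective divisors; nefness alone does not yield $h^1=0$ without one of these mechanisms. To repair your proposal you would either have to carry out the full case analysis you defer, or switch to the anticanonical vanishing route.
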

    \begin{remark}\label{rem e=0}
    When $e=0$, the blow-up of $\mathbb F_0$
    at $r$ points is isomorphic to the blow-up of $\mathbb P^2$ at $r+1$ points. 
    Thus, when $e=0$, the above Theorem is a consequence of the fact mentioned before, that 
    SHGH conjecture for $\mathbb P^2$ is true when $r\leqslant 9$. In view of 
    this, throughout we assume that $e>0$. 
    \end{remark}
    
\noindent 
{\bf Acknowledgements}. Discussions with Antonio Laface helped in simplifying some proofs
and also improving the result from $r\leqslant e+4$ to $r\leqslant e+5$. We are 
very grateful to him for his interest and his suggestions. 

\section{Motivation and Preliminaries}
Consider the blow-up of $\mathbb P^2$ at $r$ very general points, denote it by $X_r$. 
We define the \emph{virtual dimension}, \emph{expected dimension}, 
and \emph{non-specialness} of a divisor on $X_r$ analogously to 
Definitions~\ref{defn} and~\ref{non-spcl defn}.
Recall that an integral curve $C$ on a smooth projective surface is a $(-1)$-curve if 
it is a smooth rational curve with $C^2=-1$. From the genus formula it follows 
that $C$ is a $(-1)$-curve if and only if $C^2=-1$ and $K\cdot C=-1$, 
where $K$ is the canonical divisor of the smooth surface.
Let $D$ be an effective 
divisor on $X_r$. We say that $D$ is \emph{$(-1)$-special} if there 
exists a $(-1)$-curve $E$ on $X_r$ such that
$D \cdot E < -1$.
The following statements are equivalent
and are referred to as the SHGH conjecture for $X_r$:
\begin{enumerate}
    \item (Segre~\cite{Seg62}) Let $D$ be an effective divisor on $X_r$. If the general 
    curve of the linear system $|D|$ on $X_r$ is reduced, then $D$ is non-special.
    \item (Hirschowitz~\cite{Hir89}) Let $D$ be an effective divisor on $X_r$. Then $D$ is 
    special if and only if $D$ is $(-1)$-special.
    \item (Harbourne~\cite{Har92}) \label{shgh-harbourne}The following statements hold on $X_r$:
		\begin{enumerate}
			\item Every effective nef divisor on $X_r$ is non-special.
			\item Every integral curve $C \subset X_r$ with $C^2 < 0$ is a $(-1)$-curve.
		\end{enumerate}
\end{enumerate}
There are more equivalent versions of the above conjectures. For the statements of 
these and for the equivalences, we refer the reader to \cite{Har86}, \cite{Gim87} 
and \cite{CM01}. In an interesting recent article, 
\cite[Theorem 2]{Laf24}, the authors prove that on the blow-up of $\mathbb P^2$
at $r$ very general points, (3a) implies (3b).

Motivated by the above conjectures on blow-ups of $\mathbb P^2$, one may ask if it is
possible to formulate and prove similar conjectures for blow-ups of Hirzebruch surfaces. 
Hanumanthu et al. \cite{HJSA} proposed the following conjecture motivated by 
Segre's version of the SHGH conjecture for $\mb P^2$. Let $\widetilde{C}_e$ be the strict transform of $C_e$.
Note that when $e>0$, since the points are in general position, it follows that 
$C_e=\widetilde{C}_e$. 
    
	\begin{conjecture}\label{conj2}\cite[Conjecture 4.4]{HJSA}
		Let $D$ be a divisor as in \eqref{cond} which is effective.
		If a general curve in $|D|$ on $\mathbb{F}_{e,r}$ is reduced 
		and $\widetilde{C}_e$ is not a fixed component of $|D|$, then $D$ is non-special.
	\end{conjecture}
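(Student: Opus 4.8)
The plan is to prove the contrapositive, invoking that Conjecture \ref{conj3} holds for $\Fer$ when $r\leqslant e+4$ (Theorem \ref{e+4}): assuming $D$ is effective and special, I will show that either the general member of $|D|$ is non-reduced or $\widetilde{C}_e$ is a fixed component of $|D|$. The engine of the argument is a reduction algorithm that strips negative curves off $D$ one at a time while tracking $h^1$, until a nef divisor is reached; part \ref{conj3b} of the conjecture controls which curves may appear, and part \ref{conj3a} disposes of the terminal nef divisor.

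First I would record two bookkeeping facts. If $N$ is an irreducible curve with $D\cdot N<0$, then $N$ is a fixed component of $|D|$, since a general member not containing $N$ would meet it in $D\cdot N<0$ points. Second, if $E$ is a $(-1)$-curve with $D\cdot E=-k$, then $E$ is fixed at each of the first $k$ stages, so $h^0(D)=h^0(D-kE)$; combining this with the Riemann--Roch identity $v(D-kE)=v(D)+\binom{k}{2}$ and with $h^2=0$ (which holds for every effective divisor on the rational surface $\Fer$, since an effective $K_{\Fer}-D$ would force $K_{\Fer}$ to be effective, contradicting $h^0(K_{\Fer})=0$) gives
\[
h^1(D)=\binom{k}{2}+h^1(D-kE).
\]
Thus subtracting a $(-1)$-curve met in $-1$ preserves $h^0$, $v$ and $h^1$, while $k\geqslant 2$ forces $E$ into every member of $|D|$ with multiplicity at least $2$.

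Now I would run the reduction: while the current effective divisor $D'$ (initially $D$) meets some negative curve negatively, subtract that curve. By \ref{conj3b} such a curve is a $(-1)$-curve or $\widetilde{C}_e$, and under the standing hypotheses it must be a $(-1)$-curve $E$ with $D'\cdot E=-1$. Indeed, since $D-D'$ is a sum of curves already shown to be fixed, we have $|D|=(D-D')+|D'|$ throughout; hence if $\widetilde{C}_e$ were ever subtracted it would be a fixed component of $|D'|$ and therefore of $|D|$, and if a $(-1)$-curve $E$ had $D'\cdot E\leqslant -2$ it would sit in every member of $|D'|$, hence of $|D|$, with multiplicity at least $2$ --- the two excluded situations. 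Each step strictly lowers $D'\cdot A$ for a fixed ample $A$, so the algorithm terminates at an effective $D'$ with $D'\cdot N\geqslant 0$ on all negative curves $N$. An effective divisor automatically meets every irreducible curve of non-negative self-intersection non-negatively, so $D'$ is nef; by \ref{conj3a} it is non-special, $h^1(D')=0$. Since every step preserved $h^1$, we get $h^1(D)=0$, contradicting that $D$ is special. Hence specialness of $D$ forces one of the two excluded alternatives, which is the desired contrapositive.

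I expect the main obstacle to be the middle step: checking that each hypothesis on the \emph{original} system forbids the corresponding bad move at \emph{every} intermediate stage. This hinges on the persistence of the decomposition $|D|=(D-D')+|D'|$ (so that fixed components and their multiplicities for $D'$ transfer to $|D|$) and on the classification \ref{conj3b}, which guarantees that the only barriers to reaching a nef divisor are precisely an over-multiple $(-1)$-curve and a fixed $\widetilde{C}_e$. Secondary points to state cleanly are the termination of the algorithm and the elementary fact that an effective divisor non-negative on all negative curves is nef.
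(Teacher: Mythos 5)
The statement you are trying to prove is not a theorem of the paper but a conjecture (attributed to \cite[Conjecture 4.4]{HJSA}), stated for an \emph{arbitrary} number $r$ of very general points; the paper offers no proof of it, only implications relating it to Conjecture \ref{conj1} and Conjecture \ref{conj3}. This is exactly where your proposal has a genuine gap: your entire argument runs on Conjecture \ref{conj3}\ref{conj3a} and \ref{conj3b}, which you justify by invoking Theorem \ref{e+4} --- but that theorem establishes Conjecture \ref{conj3} only for $r\leqslant e+4$. For $r>e+4$ neither part of Conjecture \ref{conj3} is known (part \ref{conj3b} is needed at every step of your reduction to know that a curve met negatively is a $(-1)$-curve or $\widetilde{C}_e$, and part \ref{conj3a} is needed for the terminal nef divisor), so your argument silently restricts to $r\leqslant e+4$ and cannot establish the statement as it stands. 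What you have actually written is a proof of the implication ``Conjecture \ref{conj3} implies Conjecture \ref{conj2}'' (equivalently, the case $r\leqslant e+4$ of Conjecture \ref{conj2}); the conjecture itself remains open.

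That said, the conditional argument is essentially sound, and it is worth seeing how it sits relative to the paper. Your reduction loop is Algorithm \ref{algo} in disguise, and your bookkeeping ($h^2=0$ for effective divisors on the rational surface, $v(D-kE)=v(D)+\binom{k}{2}$, preservation of $h^1$ when $k=1$, the decomposition $|D|=(D-D')+|D'|$, and the fact that $D'\cdot E\leqslant -2$ forces $E$ to appear with multiplicity at least $2$ in every member) is correct. The paper instead factors this into two steps: Proposition \ref{eq-conjectures}(3) shows Conjecture \ref{conj3} is equivalent to Conjecture \ref{conj1} using the same algorithm and nefness-of-the-terminal-divisor argument, and the implication Conjecture \ref{conj1} $\Rightarrow$ Conjecture \ref{conj2} is quoted from \cite[Theorem 4.6]{HJSA}; your reducedness/multiplicity observation is precisely the mechanism hidden in that cited step. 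So your proposal is a reasonable merged proof of the known chain of implications, but it should be presented as such --- as a conditional result or as the case $r\leqslant e+4$ --- not as a proof of the conjecture.
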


Before stating the analogue of the second version (Hirschowitz's version) of the SHGH conjecture, we recall 
the definition of a $(-1)$-special divisor on $\Fer$ given by 
	Laface\cite{Laf2002}. Consider the following algorithm:
	
	\begin{algorithm}\label{algo}
		Let $D$ be as in \eqref{cond} and effective.  
		\begin{enumerate}
			\item \label{algo1} If $E$ is a $(-1)$-curve with $-t =D\cdot E<0$, replace $D$ by $D - tE$. 
			\item \label{algo2} If $D \cdot  \widetilde{C}_e < 0$, replace $D$ by $D -\widetilde{C}_e$. 
			\item \label{algo3} If $ D \cdot  \widetilde{C}_e \geqslant 0$ and $D\cdot E\geqslant 0$ for every $(-1)$-curve $E$, then stop. Else, go to Step \ref{algo1}.
		\end{enumerate}
	\end{algorithm}
	
		\begin{remark}\label{rem1}
		Note that since $D$ is an effective divisor, if $D\cdot C<0$ for 
		some integral curve $C$, then $C$ is a fixed component of $|D|$. 
		Hence, this process will end after finitely many steps, as in each 
		step we remove a divisor from the base locus of $|D|$. 
		Let $M$ denote the divisor obtained after the above procedure is 
		complete. It is clear that $\text{dim}|M|=\text{dim}|D|$.
	\end{remark}

	\begin{definition}
		Let $D$ be a divisor as in \eqref{cond}. 
		We say $D$ is \textit{$(-1)$-special} if $v(M) > v(D)$, 
		where $M$ is the divisor that we obtain after applying Algorithm \ref{algo}.
	\end{definition}

The following conjecture, put forward by Laface, is analogous to Hirschowitz's version
of the SHGH conjecture:
	\begin{conjecture}\cite[Conjecture 2.6]{Laf2002}\label{conj1}
		Let $D$ be a divisor as in \eqref{cond} and assume it is effective. 
		Then $D$  is special if and only if it is $(-1)$-special.
	\end{conjecture}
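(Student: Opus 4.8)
The plan is to deduce Laface's equivalence from the behaviour of nef divisors, using the reduction Algorithm \ref{algo} to pass from an arbitrary effective $D$ to a nef divisor with the same linear system. I would first run the algorithm on $D$ to obtain the divisor $M$; by Remark \ref{rem1} the process terminates and $\dim|D|=\dim|M|$, each removed summand being a $(-1)$-curve or $\widetilde{C}_e$. By construction, when the algorithm halts we have $M\cdot E\geqslant 0$ for every $(-1)$-curve $E$ and $M\cdot\widetilde{C}_e\geqslant 0$.

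The first key step is to show the output $M$ is nef. Suppose instead that some integral curve $C$ satisfies $M\cdot C<0$. Since $M$ is effective, $C$ is a fixed component of $|M|$, so I may write $M=kC+M'$ with $k\geqslant 1$ and $C$ not a component of $M'$; then $M'\cdot C\geqslant 0$ and $kC^2=M\cdot C-M'\cdot C<0$, forcing $C^2<0$. Invoking Conjecture \ref{conj3}\ref{conj3b}, such a $C$ is a $(-1)$-curve or $\widetilde{C}_e$, so the algorithm could not have terminated — a contradiction. Hence $M$ is an effective nef divisor, and Conjecture \ref{conj3}\ref{conj3a} gives that $M$ is non-special, i.e.\ $\dim|M|=e(M)$; as $M$ is effective, $\dim|M|\geqslant 0$, so $e(M)=v(M)\geqslant 0$. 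Combining with $\dim|D|=\dim|M|$ yields $\dim|D|=v(M)$.

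The equivalence is then a definition chase. By Definition \ref{non-spcl defn}, $D$ is special iff $\dim|D|>e(D)=\max\{v(D),-1\}$, i.e.\ iff $v(M)>\max\{v(D),-1\}$; since $v(M)=\dim|D|\geqslant 0>-1$, this is equivalent to $v(M)>v(D)$, which is precisely the condition that $D$ be $(-1)$-special. This proves that Conjecture \ref{conj3} implies Conjecture \ref{conj1}, and together with the main theorem (Conjecture \ref{conj3} for $r\leqslant e+4$) it establishes Laface's conjecture in that range. For the full equivalence recorded in Proposition \ref{eq-conjectures} I would also argue the converse: part \ref{conj3a} is immediate, since a nef effective divisor is fixed by the algorithm, giving $M=D$ and $v(M)=v(D)$, so Conjecture \ref{conj1} forces it to be non-special.

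The main obstacle is the nef-reduction step, which relies essentially on Conjecture \ref{conj3}\ref{conj3b} to guarantee that the \emph{only} integral curves of negative self-intersection are $(-1)$-curves and $\widetilde{C}_e$; without this classification the algorithm need not output a nef divisor and the clean identity $\dim|D|=v(M)$ collapses. Importing \ref{conj3b} from \cite{JKS25} in the range $r\leqslant e+4$ is therefore what makes the forward implication effective. Deriving \ref{conj3b} \emph{back} from Conjecture \ref{conj1}, which the reverse implication of Proposition \ref{eq-conjectures} requires, is the delicate part: from a hypothetical negative curve that is neither a $(-1)$-curve nor $\widetilde{C}_e$ one must manufacture an effective divisor whose speciality and $(-1)$-speciality disagree, and I expect this to be where the bulk of the work lies.
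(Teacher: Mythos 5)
Your proposal is correct and follows essentially the same route as the paper: it is the argument of Proposition \ref{eq-conjectures}(3) (run Algorithm \ref{algo}, use Conjecture \ref{conj3}\ref{conj3b} to see the output $M$ is nef, use Conjecture \ref{conj3}\ref{conj3a} to get $\dim|M|=v(M)$, then a definition chase), combined with Theorem \ref{e+4} to obtain Laface's conjecture for $r\leqslant e+4$. The only cosmetic differences are that you prove the nefness criterion inline where the paper cites it as well known, and you run the equivalence as a single biconditional where the paper isolates the implication ``$(-1)$-special $\Rightarrow$ special'' in Lemma \ref{-1 spcl is spcl} (which needs no conjecture at all).
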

    
	Dumnicki has proved Conjecture \ref{conj1} when $m_i=m\leqslant 8$ \cite{Dum10}.

Finally, we mention that Conjecture \ref{conj3} is analogous to the version of the SHGH 
conjecture given by Harbourne \ref{shgh-harbourne}.

	We recall, for $e\in \mb Z_{\geqslant 0}$, $\mb F_e=\mb P(\mc O_{\mb P^1}\oplus \mc O_{\mb P^1}(-e))$
	denotes the Hirzebruch surface with invariant $e$. 
	Recall from the introduction the divisors $C_e$ and $f$. 
    The canonical divisor class on $\Fe$ 
	is $$K_{\Fe}=-2C_e-(e+2)f\,.$$ The canonical divisor class on $\Fer$ is 
	$$K_{\Fer}=-2C_e-(e+2)f+E_1+E_2+\dots+E_r\,.$$

	\begin{lemma}\label{-1 spcl is spcl}
		Let $D$ be a divisor as in \eqref{cond}, which is effective. If $D$ is $(-1)$-special, then $D$ is special.
	\end{lemma}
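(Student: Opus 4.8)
The plan is to compare $D$ with the divisor $M$ produced by Algorithm \ref{algo}, playing the equality $\dim|M| = \dim|D|$ against the strict inequality $v(M) > v(D)$ furnished by the hypothesis that $D$ is $(-1)$-special. First I would record that, by Remark \ref{rem1}, applying the algorithm only strips off fixed components of the (effective) linear system $|D|$, so $\dim|M| = \dim|D|$; in particular $|M|$ is non-empty and $M$ is effective. The whole proof then reduces to the single inequality $\dim|M| \geqslant v(M)$, since chaining it with the hypothesis gives $\dim|D| = \dim|M| \geqslant v(M) > v(D)$.

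The heart of the argument is establishing $\dim|M| \geqslant v(M)$. By Riemann--Roch on the rational surface $\Fer$ one has $\dim|M| = v(M) + h^1(\Fer,\mathcal{O}_{\Fer}(M)) - h^2(\Fer,\mathcal{O}_{\Fer}(M))$, so it suffices to prove the vanishing $h^2(\Fer,\mathcal{O}_{\Fer}(M)) = 0$. By Serre duality this equals $h^0(\Fer,\mathcal{O}_{\Fer}(K_{\Fer}-M))$, and I would rule out effectivity of $K_{\Fer}-M$ by pairing it with the nef class $F_e$, the class of a fibre of the fibration $\Fer \to \mathbb{P}^1$. A short computation---using adjunction on a fibre to get $K_{\Fer}\cdot F_e = -2$, together with $M\cdot F_e \geqslant 0$ because $M$ is effective and $F_e$ is nef---yields $(K_{\Fer}-M)\cdot F_e \leqslant -2 < 0$. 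An effective divisor cannot have negative intersection with a nef class, so $K_{\Fer}-M$ is not effective and the desired vanishing follows.

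With $\dim|D| > v(D)$ in hand, it remains to upgrade this to $\dim|D| > e(D)$ using $e(D) = \max\{v(D),-1\}$. I would split into two cases. If $v(D) \geqslant -1$ then $e(D) = v(D)$ and we are done at once. If instead $v(D) < -1$ then $e(D) = -1$, and the effectivity of $D$ already gives $\dim|D| \geqslant 0 > -1 = e(D)$. In either case $\dim|D| > e(D)$, so $D$ is special.

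I expect the only genuinely non-trivial step to be the vanishing $h^2(\Fer,\mathcal{O}_{\Fer}(M)) = 0$. The subtlety is that $M$ need not retain the shape of \eqref{cond}---the algorithm may subtract copies of $(-1)$-curves and of $\widetilde{C}_e$---so the a priori vanishing recorded for divisors of the form \eqref{cond} does not transfer verbatim to $M$. Anchoring the vanishing on the nef fibre class $F_e$ sidesteps this, since it is the effectivity of $M$, rather than its explicit form, that the intersection estimate uses; the remaining steps are routine bookkeeping with Riemann--Roch and the definitions of $v$, $e$, and specialness.
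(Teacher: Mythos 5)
Your proof is correct and follows essentially the same route as the paper: compare $D$ with the output $M$ of Algorithm \ref{algo} via $\dim|D|=\dim|M|\geqslant v(M)>v(D)$, then split on whether $e(D)$ equals $v(D)$ or $-1$. The only place you go beyond the paper is the careful justification of $h^2(\Fer,\mathcal O_{\Fer}(M))=0$ by pairing $K_{\Fer}-M$ with the nef fibre class; the paper leaves this implicit, even though, as you correctly note, $M$ need not remain of the form \eqref{cond}.
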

	\begin{proof}
		Suppose that $D$ is $(-1)$-special. Let $M$ denote the divisor that we obtain after applying Algorithm \ref{algo}.
		Then by Remark \ref{rem1}, we have 
		\begin{equation}\label{-1spcl eqn}
			\text{dim}(|D|) = \text{dim} (|M|) \geqslant v({M})>v(D).       
		\end{equation} 
		
		Since $D$ is effective, we have $\dim |D|>-1$. Recall that 
		$e(D)=\max\left\{ v(D),-1\right\}$. If $e(D)=-1$, then $D$ 
		is a special divisor as $\dim |D|> e(D)$. If $e(D)=v(D)$ 
		then $D$ is special by \eqref{-1spcl eqn}.
	\end{proof}


	In the following proposition, we list the implications among the 
	three conjectures stated above (Conjecture \ref{conj3}, Conjecture \ref{conj1}
	and Conjecture \ref{conj2}).
	\begin{proposition}\label{eq-conjectures}
		We have the following relationships among the above listed conjectures.
		\begin{enumerate}
			\item Conjecture \ref{conj1} implies Conjecture \ref{conj2}.
			\item Conjecture \ref{conj2} implies Conjecture \ref{conj3}\ref{conj3b}.
			\item Conjecture \ref{conj1} is equivalent to Conjecture \ref{conj3}.
		\end{enumerate}
	\end{proposition}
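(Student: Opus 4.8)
The plan is to establish the three implications in turn, using two elementary computations throughout. Writing $\widetilde{C}_e=H_e$ (the very general points lie off $C_e$, so its strict transform equals its total transform), one has $\widetilde{C}_e^2=-e$ and $K_{\Fer}\cdot\widetilde{C}_e=e-2$. A direct substitution then shows that removing a $(-1)$-curve $E$ with $D\cdot E=-t<0$ as in Step \ref{algo1} changes the virtual dimension by $v(D-tE)-v(D)=\tfrac{t(t-1)}{2}$, while removing $\widetilde{C}_e$ with $D\cdot\widetilde{C}_e=-s<0$ as in Step \ref{algo2} changes it by $v(D-\widetilde{C}_e)-v(D)=s-1$. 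Hence $v$ never decreases along Algorithm \ref{algo}, and it increases strictly exactly when some removed $(-1)$-curve has $t\geq 2$ or $\widetilde{C}_e$ is removed with $s\geq 2$; in particular $D$ is not $(-1)$-special iff $v(M)=v(D)$. I will also use that an effective nef divisor is automatically of the form \eqref{cond}, since intersecting with $F_e$, with each $E_i$, and with the effective curve $\widetilde{C}_e=H_e$ gives $a\geq 0$, $m_i\geq 0$, and $b\geq ae\geq 0$.

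For (1), assume Conjecture \ref{conj1} and let $D$ satisfy the hypotheses of Conjecture \ref{conj2}. I would run Algorithm \ref{algo} and show, by induction on the steps, that both hypotheses---general member of the current system reduced, and $\widetilde{C}_e$ not a fixed component---are preserved, so that $v$ is never strictly increased. Step \ref{algo2} can never fire, for $D'\cdot\widetilde{C}_e<0$ with $D'$ effective would force the integral curve $\widetilde{C}_e$ to be a fixed component. If Step \ref{algo1} fires at a $(-1)$-curve $E$ with $D'\cdot E=-t$, write $D'=nE+R$ with $E$ not fixed in $|R|$; then $R\cdot E\geq 0$ and $-t=-n+R\cdot E\geq -n$, so $n\geq t$, while reducedness forces $n=1$ and hence $t=1$, leaving $v$ unchanged. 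Removing this single copy of $E$ preserves reducedness and keeps $\widetilde{C}_e$ unfixed (note $E\neq\widetilde{C}_e$, as otherwise $\widetilde{C}_e$ would be fixed). Thus $v(M)=v(D)$, so $D$ is not $(-1)$-special, and Conjecture \ref{conj1} gives that $D$ is non-special.

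For (2), assume Conjecture \ref{conj2} and let $C$ be integral with $C^2<0$. If $C=E_i$ it is a $(-1)$-curve and if $C=\widetilde{C}_e$ there is nothing to prove, so assume otherwise; then $C\cdot E_i\geq 0$, $C\cdot\widetilde{C}_e\geq 0$, and $C\cdot F_e\geq 0$ (distinct integral curves, and $F_e$ nef) exhibit $C$ in the form \eqref{cond}. Since $C$ is integral with $C^2<0$ it is rigid, so $\dim|C|=0$, its unique member is reduced, and $\widetilde{C}_e\neq C$ is not a fixed component; Conjecture \ref{conj2} then gives $C$ non-special, i.e. $0=e(C)=\max\{v(C),-1\}$, so $v(C)=0$. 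By adjunction $v(C)=C^2+1-p_a(C)$, so $C^2=p_a(C)-1\geq -1$, forcing $C^2=-1$ and $p_a(C)=0$: thus $C$ is a $(-1)$-curve.

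For (3), note first that Lemma \ref{-1 spcl is spcl} already gives one direction of Conjecture \ref{conj1} unconditionally, so it remains to prove ``not $(-1)$-special $\Rightarrow$ non-special'' assuming Conjecture \ref{conj3}, and conversely all of Conjecture \ref{conj3} assuming Conjecture \ref{conj1}. The implication Conjecture \ref{conj1}$\Rightarrow$Conjecture \ref{conj3}\ref{conj3b} is obtained by composing (1) and (2); for \ref{conj3a}, an effective nef $D$ makes Algorithm \ref{algo} halt immediately ($M=D$), so $D$ is not $(-1)$-special and Conjecture \ref{conj1} yields non-specialness. For the converse, given $D$ not $(-1)$-special, the output $M$ satisfies $\dim|M|=\dim|D|$ and $v(M)=v(D)$, so $D$ is non-special iff $M$ is; and $M$ is effective with $M\cdot E\geq 0$ for every $(-1)$-curve and $M\cdot\widetilde{C}_e\geq 0$. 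Using Conjecture \ref{conj3}\ref{conj3b} to classify integral curves with negative self-intersection, and writing $M=kC+M''$ with $C$ not a component of $M''$ when $C^2\geq 0$ so that $M\cdot C=kC^2+M''\cdot C\geq 0$, one checks $M\cdot C\geq 0$ for every integral curve; hence $M$ is nef, and Conjecture \ref{conj3}\ref{conj3a} makes $M$---and therefore $D$---non-special. I expect the main obstacle to be the bookkeeping in (1): verifying that reducedness of the general member and the non-fixedness of $\widetilde{C}_e$ genuinely propagate through every step of the algorithm, together with the care needed (here and in (3)) to confirm that the intermediate and terminal divisors really lie in the class \eqref{cond} so that the definitions of (non-)specialness apply.
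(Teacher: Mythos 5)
Your proposal is correct, and for part (3) it is essentially the paper's argument: one direction is the observation that a nef effective divisor is an immediate fixed point of Algorithm \ref{algo} (so Conjecture \ref{conj1} forces non-specialness), and the other direction runs the algorithm, uses Conjecture \ref{conj3}\ref{conj3b} plus the elementary criterion to conclude the output $M$ is nef, and then applies Conjecture \ref{conj3}\ref{conj3a} together with $\dim|M|=\dim|D|$; your phrasing via ``not $(-1)$-special $\Rightarrow$ $v(M)=v(D)$'' is just the contrapositive of the paper's ``special $\Rightarrow$ $(-1)$-special.'' The genuine difference is in parts (1) and (2), where the paper simply cites \cite[Theorems 4.6 and 4.9]{HJSA} while you supply self-contained proofs. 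Your computations $v(D-tE)-v(D)=t(t-1)/2$ and $v(D-\widetilde{C}_e)-v(D)=s-1$ (using $\widetilde{C}_e=H_e$, $\widetilde{C}_e^2=-e$, $K_{\Fer}\cdot\widetilde{C}_e=e-2$) are right, the bookkeeping that reducedness of the general member and non-fixedness of $\widetilde{C}_e$ propagate through the algorithm is handled correctly (the key point being that reducedness forces each fixed $(-1)$-curve to occur with multiplicity one, hence $t=1$), and the adjunction argument in (2) showing $v(C)=0$ and $C^2<0$ force $C^2=-1$, $p_a(C)=0$ is clean. What your route buys is independence from the external reference and an explicit monotonicity statement for $v$ along the algorithm; what the paper's route buys is brevity. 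I see no gap.
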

	\begin{proof}
		For (1), see \cite[Theorem 4.6]{HJSA}. For (2), see \cite[Theorem 4.9]{HJSA}.
		For (3), notice that if $D$ is effective and nef, then after applying 
		Algorithm \ref{algo}, we get $M=D$, and so $D$ is not $(-1)$-special. 
		By Conjecture 
		\ref{conj1} it follows that it is non-special, that is, 
		Conjecture \ref{conj3}\ref{conj3a} holds. Combined with the first two assertions 
		it follows that Conjecture \ref{conj1} implies Conjecture \ref{conj3}.
		
		The following elementary result is very well known. Let $D$ be an 
		effective divisor on $\Fer$. If $D\cdot C\geqslant 0$ for all 
		integral curves $C$ with $C^2<0$, then $D$ is nef.
		
		We will prove that Conjecture \ref{conj3} implies Conjecture \ref{conj1}.
		By Lemma \ref{-1 spcl is spcl}, every effective $(-1)$-special divisor is special.
		So it suffices to show that every effective special divisor $D$ is $(-1)$-special.
		
		
		Let $M$ denote the divisor that we obtain after applying 
		Algorithm \ref{algo} to $D$. So we have $M\cdot E\geqslant 0$ 
		for all $(-1)$-curves $E$ and $M\cdot \widetilde{C}_e\geqslant 0$. 
		Note that by Conjecture \ref{conj3}\ref{conj3b}, 
		if $C$ is an integral curve with $C^2<0$, then $C$ is either a 
		$(-1)$-curve or it is $\widetilde{C}_e$. 
		Hence, $M$ is nef by the elementary result stated above.
		It is clear from Remark \ref{rem1} that $M$ is effective. 
		By Conjecture \ref{conj3}\ref{conj3a}, $M$ is non-special. 
		So $\dim|M|=e(M)=\max\{v(M),-1\}=v(M)$. Since $D$ is special, we have
		$$v(M)=\dim|M|=\dim|D|>e(D)\geqslant v(D).$$
		So $D$ is a $(-1)$-special divisor.
	\end{proof}

    \begin{remark}
        In analogy with the case of $\mathbb P^2$, we expect that Conjecture \ref{conj3}, 
        Conjecture \ref{conj2} and Conjecture \ref{conj1} are equivalent. The preceding 
        Proposition proves the equivalence of Conjecture \ref{conj3} and 
        Conjecture \ref{conj1}. We do not see how to prove that Conjecture \ref{conj2} 
        and Conjecture \ref{conj1} are equivalent.
    \end{remark}

        \begin{lemma} We have the following:
        \begin{enumerate}
            \item $h^0(\Fe, \mc O_{\Fe}(-K_{\Fe}))= 9$ for $e=0,1$,
            \item $h^0(\Fe, \mc O_{\Fe}(-K_{\Fe}))= e+6$ for $e\geqslant 2$.
        \end{enumerate}
        \end{lemma}
        \begin{proof}
            From \cite[Lemma 2.7]{JKS25}, we have 
            $$\pi_{\ast}(2C_e)=\mc {O}_{\mb {P}^1}\oplus\mc {O}_{\mb {P}^1}(-e)
                \oplus\mc {O}_{\mb {P}^1}(-2e)\,.$$
            Hence, using the projection formula we have
            \begin{equation*}
                \begin{split}
                    h^0(\Fe,O_{\Fe}(-K_{\Fe}))&=h^0(\Fe,O_{\Fe}(2C_e+(e+2)f)\\
                    &=h^0(\mb P^1,\pi_{\ast}(\mc O_{\Fer}(2C_e))\otimes \mc O_{\mb P^1}(e+2))\\
                    &=h^0(\mb P^1,\mc O_{\mb P^1}(e+2)\oplus \mc O_{\mb P^1}(2) \oplus \mc O_{\mb P^1}(-e+2))\\
                    \end{split}
            \end{equation*}
            Now it is easy to see that $h^0(\Fe, \mc O_{\Fe}(-K_{\Fe}))= 9$ for $e=0,1$ and $h^0(\Fe, \mc O_{\Fe}(-K_{\Fe}))= e+6$ for $e\geqslant 2$.
        \end{proof}
The above lemma shows that $\Fer$ is an anticanonical surface if $r\leqslant e+5$. 

The following Proposition will be very useful for us. It may be well known to experts, however, 
we include a proof as we could not find a reference. We begin by describing the setup.
Let $1\leqslant r\leqslant e+5$ be an integer. 
Consider the embedding of $\Fe$ into $\mathbb P(H^0(\mc O_{\Fe}(C_e+(e+2)f)))$. 
If we choose the $r$ points to be very general, then there is a general hyperplane
passing through these $r$ points, which meets $\Fe$ in a smooth curve $C$ containing these $r$
points. Let $C'$ denote the strict transform of $C$ in $\Fer$. The class of $C'$ is 
$\widetilde{C}_e+(e+2)f-\sum_{i=1}^rE_i$. Then we have 
$-K_{\Fer}=\widetilde{C}_e+C'$. Also note that $\widetilde{C}_e\cdot C'=2$ and since the 
hyperplane defining $C$ is general, $\widetilde{C}_e$ and $C'$ meet transversely. 
Let $Y_r:=\widetilde{C}_e\cup C'$ be the union of $\widetilde{C}_e$ and $C'$ inside $\Fer$.
After identifying $\widetilde{C}_e$ and $C'$ with $\mb P^1$, we may think of 
$Y_r$ as the union of two copies of $\mb P^1$, where, for 
$i\in \{0,1\}$, the point $i$ on $\widetilde{C}_e$ is glued with the 
point $i$ on $C'$. Given a line bundle $L$ on $Y_r$ such that $L\vert_{\widetilde{C}_e}$ 
and $L\vert_{C'}$ are trivial, we may assume that the trivializations are 
such that the  isomorphism between the fibers $L\vert_{\widetilde{C}_e,0}$ and 
$L\vert_{C',0}$ is the identity map. Then the isomorphism between 
$L\vert_{\widetilde{C}_e,1}$ and $L\vert_{C',1}$ is multiplication by $\theta\in \mb C^*$,
which determines $L$ completely. 
Using this it is easy to see that there is the following short exact sequence 
$$0\to \mb C^*\to {\rm Pic}(Y_r)\to {\rm Pic}(\widetilde{C}_e)\oplus {\rm Pic}(C')\to 0\,.$$

\begin{proposition}\label{injectivity}
    Let $1\leqslant r\leqslant e+5$ be an integer. With the setup as described 
    above, the restriction map of Picard groups 
    ${\rm Pic}(\Fer)\to {\rm Pic}(Y_r)$ is an inclusion. 
\end{proposition}
\begin{proof}
    It is easy to check that the map ${\rm Pic}(\Fe)\to {\rm Pic}(C_e)\oplus {\rm Pic}(C)$
    in an inclusion. 
    
    Let us consider the case $r=1$. 
    Let $p$ be a point on the curve $C$ and consider the blowup at $p$, which we 
    denote $\mb F_{e,1}$. Let $E$ denote the exceptional divisor. A line 
    bundle on $\mb F_{e,1}$ is given by 
    $\mc O_{\mb F_{e,1}}(a\widetilde{C}_e+bf+cE)$. 
    If the restrictions $\mc O_{\mb F_{e,1}}(a\widetilde{C}_e+bf+cE)\vert_{\widetilde{C}_e}$ 
    and $\mc O_{\mb F_{e,1}}(a\widetilde{C}_e+bf+cE)\vert_{C'}$ 
    are trivial, then we get $b=ae$ and $c=-a(e+2)$.
    Let $L$ be a line bundle whose restriction to $Y_1$ is
    trivial. Then further restrictions to $\widetilde{C}_e$ and $C'$ are trivial. 
    It follows that $L=\mc O_{\mb F_{e,1}}(a(\widetilde{C}_e+eF_r)-a(e+2)E)$. 
    
    Let $s$ be a general global section of $\mc O_{\mb F_{e,1}}(\widetilde{C}_e+ef)$.
    Using our identification of $C'$ with $\mb P^1$, we may assume that $s$ vanishes
    at points $\{\lambda_1,\ldots,\lambda_{e+2}\}\in \mb P^1\setminus \{0,1,\infty\}$.
    Let $\{p'\}= E\cap C'$.
    We may assume that $p'$ corresponds to a point 
    $\lambda \in \mb P^1\setminus \{0,1,\infty\}$. Consider a global section $s'$ of $\mc O_{\mb F_{e,1}}((e+2)E)$.
    Then $(s^a\vert_{Y_1})/((s')^a\vert_{Y_1})$ defines a meromorphic function on $Y_1$ whose restriction to 
    $\widetilde{C}_e$ is a nonzero constant. It follows that the (because of the way $\widetilde{C}_e$ and $C'$
    are glued) 
    restriction of this meromorphic 
    function to $C'$ satisfies 
    \begin{equation}\label{equality of values}
    \frac{s^a\vert_{C'}}{(s')^a\vert_{C'}}(0)=\frac{s^a\vert_{C'}}{(s')^a\vert_{C'}}(1)\,.    
    \end{equation}
    This meromorphic function equals
    $$f(z):=\mu\frac{\prod_{i=1}^{e+2}(z-\lambda_i)}{(z-\lambda)}\,,\qquad \mu\in \mb C^*\,.$$
    Clearly, this function does not satisfy \eqref{equality of values} if $\lambda$ is general. 
    This proves that ${\rm Pic}(\mb F_{e,1})\to {\rm Pic}(Y_1)$ is an inclusion. 
    
    Let us assume that we have proved the Proposition for all $r$ such that  $1\leqslant r<e+5$. We will
    now prove it for $r+1$. We will slightly abuse notation and use same notations to denote the strict transforms
    of $C_e$ and $C$ in $\Fer$ and $\mb F_{e,r+1}$. We may write 
    ${\rm Pic}(\mb F_{e,r+1})={\rm Pic}(\Fer)\oplus \mb Z[E_r-E_{r+1}]$.
    By our hypothesis, the map ${\rm Pic}(\Fer)\to {\rm Pic}(Y_r)$ is an inclusion. 
    Let $K$ denote the kernel of the map ${\rm Pic}(\Fer)\to {\rm Pic}(\widetilde{C}_e)\oplus {\rm Pic}(C')$. 
    We have a commutative diagram 
    \[\xymatrix{
    0\ar[r] & K\ar[r]\ar[d] & {\rm Pic}(\Fer) \ar[r]\ar[d] & {\rm Pic}(\widetilde{C}_e)\oplus {\rm Pic}(C')\ar@{=}[d]\\
    0\ar[r] & \mb C^*\ar[r] & {\rm Pic}(Y_r) \ar[r] & {\rm Pic}(\widetilde{C}_e)\oplus {\rm Pic}(C')
    }
    \]
    Then $K$ is a subgroup of $\mb C^*$. 
    We have a commutative diagram 
    \[\xymatrix{
    {\rm Pic}(\Fer) \ar[r]\ar[d] & {\rm Pic}(Y_r) \ar[d]^\cong\ar[r] & {\rm Pic}(\widetilde{C}_e)\oplus {\rm Pic}(C')\ar[d]^\cong\\
    {\rm Pic}(\mb F_{e,r+1})\ar[r] & {\rm Pic}(Y_{r+1})\ar[r] & {\rm Pic}(\widetilde{C}_e)\oplus {\rm Pic}(C')
    }
    \]
    in which all the maps are pullbacks of line bundles. 
    
    Consider the subgroup $\tilde K\subset  \mb C^*$
    consisting of $\alpha\in \mb C^*$ such that there is some integer $n$ for which $\alpha^n\in K$. 
    This is a countable subgroup of $\mb C^*$. 
    Since $\mc O_{\mb F_{e,r+1}}(E_r-E_{r+1})$ restricted to 
    $\widetilde{C}_e$ and $C'$ are trivial, it follows
    that if the restriction of $\mc O_{\mb F_{e,r+1}}(E_r-E_{r+1})$ to $Y_{r+1}$ is outside 
    $\tilde K$, then the map ${\rm Pic}(\mb F_{e,r+1})\to {\rm Pic}(Y_{r+1})$ is an inclusion.
    We argue as in the preceding para. Let $s$ be a nonzero global section of $\mc O_{\mb F_{e,r+1}}(E_r)$
    and let $s'$ be a nonzero global section of $\mc O_{\mb F_{e,r+1}}(E_{r+1})$.
    Then we get a meromorphic function on $Y_{r+1}$,
    $$f(z):=\mu\frac{(z-\lambda_r)}{(z-\lambda_{r+1})}\,,\qquad \mu\in \mb C^*\,.$$
    The image of $\mc O_{\mb F_{e,r+1}}(E_r-E_{r+1})$ in $\mb C^*$ is 
    $$\frac{f(1)}{f(0)}=\frac{(-\lambda_{r+1})}{(-\lambda_{r})}\frac{(1-\lambda_r)}{(1-\lambda_{r+1})}=\frac{(1-\frac{1}{\lambda_r})}{(1-\frac{1}{\lambda_{r+1}})}\,.$$
    We may choose $\lambda_{r+1}$ to be general, so that this number is not in $\tilde K$. 
    This completes the proof of the Proposition. 
\end{proof}
    
    \section{Case $r\leqslant e+5$}
        In this section, we prove the Conjecture \ref{conj3} when  $r\leqslant e+5$. 
        Throughout this section, unless stated otherwise, we assume $e>0$ and $r\leqslant e+5$. 

        \begin{theorem}\label{e+4}
            Let $r\leqslant e+5$. Then 
            Conjecture \ref{conj3} holds on $\Fer$. 
        \end{theorem}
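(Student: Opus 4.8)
The plan is to verify the two parts of Conjecture \ref{conj3} separately, assembling the propositions established earlier in this section. By Remark \ref{rem e=0} we may assume $e>0$, so that all the hypotheses of the section (in particular $r\leqslant e+4$ with $e>0$) are in force and every cited result applies.

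Part \ref{conj3b} of the conjecture is already available. For $r\leqslant e+4$, every integral curve $C$ on $\Fer$ with $C^2<0$ is either a $(-1)$-curve or equals $\widetilde{C}_e$ by \cite[Theorem 5.5]{JKS25}. Thus it only remains to establish part \ref{conj3a}, namely that every effective nef divisor $D$ on $\Fer$ is non-special.

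For part \ref{conj3a} I would split into two cases according to whether $\widetilde{C}_e$ is a fixed component of $|D|$. If $\widetilde{C}_e$ \emph{is} a fixed component, then Corollary \ref{C_e is fixed component} immediately gives that $D$ is non-special. If $\widetilde{C}_e$ is \emph{not} a fixed component, then I would invoke Proposition \ref{main thm1}. To apply that proposition one must first verify its standing hypothesis, that $h^1(\Fer,\mc O_{\Fer}(C))=0$ for every integral curve $C\neq \widetilde{C}_e$; this is precisely the content of Proposition \ref{2.1 is true}. With the hypothesis in hand, Proposition \ref{main thm1} yields that $D$ is non-special. Combining the two cases completes part \ref{conj3a}, and hence the whole conjecture.

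Since the genuine analytic and geometric work has already been carried out in the preceding results (the anticanonical vanishing of Proposition \ref{harbourne}, the curve-by-curve vanishing of Proposition \ref{2.1 is true}, and the Ciliberto--Miranda style degeneration argument underlying Proposition \ref{main thm1}), the main obstacle is not in this final assembly but was in establishing those inputs. The only point requiring care here is that the dichotomy on $\widetilde{C}_e$ be exhaustive and that the hypotheses of each cited statement are met: one must confirm that the fixed-component case and its complement are governed by Corollary \ref{C_e is fixed component} and Proposition \ref{main thm1} respectively, and that Proposition \ref{2.1 is true} supplies exactly the cohomological vanishing needed to unlock the latter.
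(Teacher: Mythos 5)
Your proposal is correct and follows exactly the same route as the paper's proof: reduce to $e>0$ via Remark \ref{rem e=0}, obtain part \ref{conj3b} from \cite[Theorem 5.5]{JKS25}, and prove part \ref{conj3a} by splitting on whether $\widetilde{C}_e$ is a fixed component of $|D|$, using Corollary \ref{C_e is fixed component} in one case and Proposition \ref{main thm1} (with its hypothesis supplied by Proposition \ref{2.1 is true}) in the other. No gaps.
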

        \begin{proof}
        Since $r\leqslant e+5$, we can find a smooth curve on $\Fe$
        passing through $p_1,p_2,\dots,p_r$. 
        Let $C'$ denote the strict transform of this smooth curve on $\Fer$. Note that 
                \begin{itemize}
                    \item $-K_{\Fer}=\widetilde{C}_e+C'$,
                    \item $\widetilde{C}_e\cdot C'=2$,
                    \item $(-K_{\Fer})\cdot C'\geqslant 1$.
                \end{itemize} 
        
        We will use \cite[Theorem I.1]{Har97}. 
        Let $D$ be a nef and effective divisor on $\Fer$. 
        If $D\cdot (-K_{\Fer})>0$ then $h^1(\Fer, \mc O_{\Fer}(D))=0$. So let us assume that 
        $D\cdot (-K_{\Fer})=0$ and $h^1(\Fer, \mc O_{\Fer}(D))>0$. Applying \cite[Theorem I.1]{Har97} 
        it follows that a general section of $D$ has a connected component
        which is disjoint from $\widetilde{C}_e\cup C'$. Let $C$ be an integral
        curve in this connected component. Then $C$ is disjoint from 
        $\widetilde{C}_e$ and $C'$. It follows that the restriction of $\mc O_{\Fer}(C)$
        to $\widetilde{C}_e\cup C'$ is trivial. This contradicts 
        Proposition \ref{injectivity}. This proves Conjecture \ref{conj3}\ref{conj3a}.

        Let $C$ be an integral curve with $C^2<0$ and $C\neq \widetilde C_e$. Note that 
        $$(-K_{\Fer})\cdot C\geqslant 0\,.$$ 
        If $(-K_{\Fer})\cdot C=0$, then we get $C\neq C'$, and so $\widetilde C_e\cdot C=C'\cdot C=0$. 
        It follows that the restriction of $\mc O_{\Fer}(C)$ to $-K_{\Fer}$ is trivial,
        which contradicts Proposition \ref{injectivity}. Thus, $(-K_{\Fer})\cdot C\geqslant 1$.
        $$0>C^2\geqslant -2+(-K_{\Fer})\cdot C\geqslant -1\,.$$
        It follows that $C$ is a $(-1)$-curve. This proves Conjecture \ref{conj3}\ref{conj3b}.
        \end{proof}
    \bibliographystyle{halpha}
	\bibliography{Hir.bib}
\end{document}